\newtheorem{df}{Definition}[section]
\newtheorem{thm}{Theorem}[section]
\newcommand{\R}{{\rm I}\kern-0.18em{\rm R}}
\newcommand{\1}{{\rm 1}\kern-0.25em{\rm I}}
\newcommand{\E}{{\rm I}\kern-0.18em{\rm E}}
\newcommand{\p}{{\rm I}\kern-0.18em{\rm P}}
\title{Outliers and related problems}
\author{Lev B. Klebanov\footnote{Department of Probability and Mathematical Statistics, MFF, Charles University, Czech Republic. e-mail: lev.klebanov@mff.cuni.cz }, Jaromir Antoch\footnote{Department of Probability and Mathematical Statistics, MFF, Charles University, Czech Republic.}, Andrea Karlova\footnote{Institute of Information Theory and Automation, CAS, Prague, Czech Republic.}\\ and Ashot V. Kakosyan\footnote{Yerevan State University, Yerevan, Armenia.}}
\date{}
\begin{document}
\maketitle
\begin{abstract}
We define outliers as a set of observations which contradicts the proposed mathematical (statistical) model and  
we discuss the frequently observed types of the outliers. Further we explore what changes in the model have to be made 
in order to avoid the occurance of the outliers. We observe that some variants of the outliers lead to classical results in probability, 
such as the law of large numbers and the concept of heavy tailed distributions.

{\bf Key words:} outlier; the law of large numbers; heavy tailed distributions; model rejection.
\end{abstract}

\section{Introduction and suggestive reflections}\label{s1}
\setcounter{equation}{0}
In this paper we revise the concept of the {\bf outliers}. We found the contemporary notion rather vague, 
which motivates us to carefuly dispute its meaning. Let us start by closely looking at the definion of outlier 
provided by the widely popular free internet encyclopedia Wikipedia. The outliers are defined there as follows: 
``In statistics, an outlier is an observation point that is distant from other observations. 
An outlier may be due to variability in the measurement or it may indicate experimental error; 
the latter are sometimes excluded from the data." Obviously, the definition is given neither in mathematically nor statistically correct way. 
In particular, we found the description of "the point being distant" from other observations rather confusing.\footnote{A little bit better seems to be a definition given on NISTA site:``An outlier is an observation that lies an abnormal distance from other values in a random sample from a population. In a sense, this definition leaves it up to the analyst (or a consensus process) to decide what will be considered abnormal. Before abnormal observations can be singled out, it is necessary to characterize normal observations." However, it has similar drawbacks.} In our opinion, it is essential to specify 
some measurement unit of the considered distance and mainly the definition of the corresponding considered distance. 
Therefore we wish to conclude that the term outlier in such a setup is highly depended on the choice of topology 
and geometry of the space in which we consider our experiment. In the same manner, we found the term "experimental error" 
equally misleading. Say, outlier is an observation which is not connected to the particular experiment, 
and so this observation will not appear in the next experiment. 
However, the statistics is devoted to repeating the experiments, 
and such observations will be automatically excluded from further experiments and study. 
Now consider the  possibility that such "distant" observations remain appearing in the repetitions of our experimental study. 
In that case,  we need to keep the observations attributed to the experiment. Therefore, it is misleading to label the observations as "errors".
For example, the trigerring event of occurance of such observations can be caused by  the design of the particular experiment, 
i.e. the way how the experiment is designed does not capture the nature of  corresponding applied problem. 
As a result, some observations may appear as a natural phenomena seamlessly to the considered problem. 
However, there are no mathematical or statistical tools to recognize such a situation and so we are left with concluding that:
{\it such observations are in contradiction with mathematical model choosen to describe the practical model under study}. 
Of course, if some observations are in contradiction with one model, they may be in a good agreement with another model. 
And so we conclude that the notion of outliers is a model sensitive, i.e. the outlier needs to be associated with 
the concrete mathematical or statistical model. 

Based on our initial discussion, let us give the following definition.
\begin{df}\label{de1}
Consider a mathematical model of some real phenomena experiment. 
We say that an observation is the outlier for this particular model 
if it is "in contradiction" with the model, i.e. it is either impossible to have such an observation under the assumption that the model holds, 
or the probability to obtain such observation for the case of true model is extremely low. 
If the probability is very small yet non-zero, we denoted the probability as $\beta$, we will call 
relevant observation the $\beta$-outlier. 
\end{df}

Definition \ref{de1} gives precise sense to the second part of the Wikipedia definition. 
However, it provides no connection to the first part. In the following sections
of this paper we provide the arguments and explanations that some typical cases of the outliers appearance 
in the statistical modelling are closely connected with the  properly defined "the distant character" of them. 
These "proper definitions" provide meaningful suggestions to posssible model modifications 
in order to include the outliers as an element of the new model. 
Note that some ideas of the modification of outliers definitions were already considered in \cite{Kl16}.
 
\section{First definition of distant outliers}\label{s2}
\setcounter{equation}{0}
\subsection{Outliers of the first kind}\label{2ss1}

In this section we explore the situation when some observations 
observation are "distant" from the others. What is the "unit of measurement" for such a distance?
The natural way to start is to measure the distance of the observations to their mean value 
in terms of sample variance. 

Suppose that $X_1,X_2, \ldots ,X_n$ is a sequence of independent identically distributed (i.i.d.) random variables. Denote by
\[ \bar{x}_n=\frac{1}{n}\sum_{j=1}^{n}X_j, \;\; s^2_n=\frac{1}{n}\sum_{j=1}^{n}(X_j-\bar{x})^2\]
their empirical mean and empirical variance correspondingly. Let $k>0$ be a fixed number. 
Namely, let us estimate the following probability 
\begin{equation}\label{eq3}
p_n=\p \{|X-\bar{x}_n|/s_n>k\},
\end{equation}
\begin{df}\label{de2}
We say that the distribution of $X$ produces outliers of the first kind if the probability (\ref{eq3}) is high (say, higher than for normal distribution). 
\end{df}
Really, if one has a model based on Gaussian distribution then the presence of many observations with $p_n$ greater that for normal case contradicts to the model, and the observations appears to be outliers in the sense of our Definition \ref{de1}. Such approach was used in financial mathematics to show the Gaussian distribution provides bad model for corresponding data (see, for example, \cite{EK, BMW}).

The observations $X_j$ for which the inequality $|X_j-\bar{x}_n|/s_n>k$ holds appears to be outliers for Gaussian model. In some financial models the presence of them were considered as an argument for the existence of heavy tails for real distributions.  Unfortunately, this is not so (see \cite{KV, KlF, KTK}).

\begin{thm}\label{th1}(see \cite{KTK}) Suppose that $X_1,X_2, \ldots ,X_n$ is a sequence of i.i.d. r.v.s belonging to a domain of attraction of strictly stable random variable with index of stability $\alpha \in (0,2)$. Then
		\begin{equation}
		\label{eq4}
		\lim_{n \to \infty}p_n =0.
		\end{equation}

\end{thm}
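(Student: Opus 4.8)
The plan is to exploit the well-known description of the domain of attraction of a strictly stable law with index $\alpha\in(0,2)$: if $X_1,X_2,\ldots$ are i.i.d. in such a domain, then $\bar{x}_n$ and $s_n$ grow at a controlled rate, and the key point is that for $\alpha<2$ the empirical variance $s_n^2$ diverges much faster than it would in the finite-variance case. Concretely, if $F$ is in the domain of attraction of a strictly stable law with index $\alpha$, then the tail satisfies $\p\{|X_1|>t\}=t^{-\alpha}L(t)$ for a slowly varying $L$, and there exist norming constants $a_n\sim n^{1/\alpha}\tilde L(n)$ (with $\tilde L$ slowly varying) such that $(\sum_{j=1}^n X_j - b_n)/a_n$ converges in distribution to the stable law, where $b_n$ is an appropriate centering ($b_n=0$ or $b_n=nc$ depending on $\alpha$). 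First I would record these facts and fix notation.

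Next I would analyze the denominator. The central observation is that $n s_n^2=\sum_{j=1}^n X_j^2 - n\bar{x}_n^2$, and $\sum_{j=1}^n X_j^2$ is a sum of i.i.d. nonnegative random variables whose common tail is $\p\{X_1^2>t\}=\p\{|X_1|>\sqrt t\}=t^{-\alpha/2}L(\sqrt t)$, i.e. regularly varying with index $\alpha/2\in(0,1)$. Hence $\sum_{j=1}^n X_j^2$ is itself in a domain of attraction of a stable law of index $\alpha/2$, and in fact $\sum_{j=1}^n X_j^2 / a_n^2 \Rightarrow S$ for a positive stable random variable $S$ with the same norming order $a_n^2\sim n^{2/\alpha}\tilde L(n)^2$ (this matches because the maximal term dominates: $\max_{j\le n}X_j^2$ is of order $a_n^2$). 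Combining with $\bar x_n = O_P(a_n/n)=o_P(a_n)$ when suitably centered, we get $s_n^2 \asymp a_n^2/n$ in probability, i.e. $s_n/(a_n/\sqrt n)$ is tight and bounded away from $0$ in probability; more precisely $\sqrt n\, s_n/a_n \Rightarrow \sqrt S>0$.

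Then I would write, for a generic observation $X=X_1$ (independent of the large-$n$ asymptotics, or one of the sample points — the statement is the same up to $O(1/n)$ corrections),
\[
p_n=\p\!\left\{\frac{|X-\bar x_n|}{s_n}>k\right\}
=\p\!\left\{|X-\bar x_n|>k\,s_n\right\}.
\]
Since $k s_n \asymp k\, a_n/\sqrt n \to\infty$ in probability while $X-\bar x_n$ is only of order $a_n/n\cdot n^{?}$... more carefully: $X$ itself is $O_P(1)$ (a single random variable with a fixed distribution) and $\bar x_n$, appropriately centered, is $O_P(a_n/n)$. Since $a_n/n\sim n^{1/\alpha-1}\tilde L(n)$ and $1/\alpha-1<1/\alpha-1/2$, the threshold $k s_n$ of order $n^{1/\alpha-1/2}$ dominates both $X$ and $\bar x_n$. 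Therefore $\p\{|X-\bar x_n|>k s_n\}\to 0$. Making this rigorous requires a uniform lower bound on $s_n$: I would choose $\varepsilon>0$, use the convergence $\sqrt n s_n/a_n\Rightarrow\sqrt S$ to get a constant $\delta_\varepsilon>0$ with $\p\{s_n<\delta_\varepsilon a_n/\sqrt n\}<\varepsilon$ for large $n$, and then bound
\[
p_n\le \varepsilon+\p\!\left\{|X-\bar x_n|>k\,\delta_\varepsilon\,a_n/\sqrt n\right\},
\]
and the last probability $\to 0$ because $|X-\bar x_n|=O_P(a_n/n)=o_P(a_n/\sqrt n)$. Letting $\varepsilon\to 0$ finishes it.

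The main obstacle is the joint control of numerator and denominator: $X-\bar x_n$ and $s_n$ are dependent (both involve the whole sample), so I cannot naively multiply asymptotics. The clean way around this is the tightness/lower-bound argument above — reduce the denominator to a deterministic lower threshold on a high-probability event, after which only one-sided tail bounds on $|X-\bar x_n|$ are needed. A secondary technical point is handling the centering $b_n$ correctly across the three regimes $\alpha<1$, $\alpha=1$, $1<\alpha<2$ (for $\alpha>1$ the mean exists and one centers by it; for $\alpha\le 1$ one centers by $0$ or by truncated means), but in each case the conclusion $\bar x_n = o_P(a_n/\sqrt n)$ holds because $a_n/n = o(a_n/\sqrt n)$, so the regimes can be treated uniformly once the norming sequence $a_n$ is in hand. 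I would cite \cite{KTK} for the detailed constants and present the argument above as the conceptual skeleton.
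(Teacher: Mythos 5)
Your proof is correct, and it rests on the same core observation as the paper's: for $\alpha<2$ the empirical standard deviation $s_n$ diverges like $a_n/\sqrt n\sim n^{1/\alpha-1/2}$ (up to slowly varying factors), which strictly dominates the order of $|X-\bar x_n|$, namely $O_P(\max(1,a_n/n))$. The difference is organizational and in rigor. The paper splits into three cases ($1<\alpha<2$, $0<\alpha<1$, $\alpha=1$) and in each case writes down the explicit stable-limit representations ($\bar x_n\to a$ and $s_n\to\infty$ for $\alpha>1$; $\bar x_n\sim n^{1/\alpha-1}Y$ and $s_n^2\sim n^{2/\alpha-1}Z$ for $\alpha<1$), then concludes directly that the probability of exceeding a diverging threshold vanishes. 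You instead run a single unified argument through the norming sequence $a_n$, using that $\sum_j X_j^2/a_n^2$ converges to a positive $\alpha/2$-stable limit and that $a_n/n=o(a_n/\sqrt n)$ in every regime, so no case split is needed (the paper's case 3, which it omits as ``similar,'' is absorbed automatically). You are also more careful on a point the paper glosses over: since $X_1$, $\bar x_n$ and $s_n$ are mutually dependent, one cannot simply multiply marginal asymptotics; your union bound $p_n\le\p\{s_n<\delta_\varepsilon a_n/\sqrt n\}+\p\{|X-\bar x_n|>k\delta_\varepsilon a_n/\sqrt n\}$ cleanly reduces everything to marginal statements. The only blemish is the intermediate claim $|X-\bar x_n|=O_P(a_n/n)$, which fails for $\alpha>1$ (where $a_n/n\to0$ but $X=O_P(1)$); the bound you actually need and use, $|X-\bar x_n|=o_P(a_n/\sqrt n)$, is nonetheless valid in all regimes.
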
 
\begin{proof}
	Since $X_j,\; j=1, \ldots ,n$ belong to the domain of attraction of strictly stable random variable with index $\alpha <2$, it is also true that $X_1^2, \ldots , X_n^2$ belong to the domain of attraction of one-sided stable distribution with index $\alpha /2$. 
	
	1) Consider at first the case $1<\alpha <2$. In this case, $\bar{x}_n \stackrel[n \to \infty]{}{\longrightarrow }a=\E X_1$ and
	$s_n \stackrel[n \to \infty]{}{\longrightarrow }\infty$. We have
	\[ \p\{|X_1 - \bar{x}_n| >k s_n\} = \p\{X_1> ks_n+\bar{x}_n\}+\p\{X_1< -ks_n+\bar{x}_n\} = \]
	\[=\p\{X_1> ks_n+a+o(1)\}+\p\{X_1< -ks_n+a +o(1)\}  \stackrel[n \to \infty]{}{\longrightarrow } 0. \]
	
	2) Suppose now that $0<\alpha < 1$. In this case, we have $\bar{x}_n \sim n^{1/\alpha -1}Y$ as $n \to \infty$. Here $Y$ is $\alpha$-stable random variable, and the sign $\sim$ is used for asymptotic equivalence. Similarly, 
	\[ s^2_n =\frac{1}{n}\sum_{j=1}^{n}X_j^2 - \bar{x}^2_n \sim n^{2/\alpha -1}Z (1+o(1)), \]
	where $Z$ has one-sided positive stable distribution with index $\alpha /2$. We have
	\[ \p\{|X_1-\bar{x}_n|>k s_n\}=\p\{ (X_1-\bar{x}_n)^2>k s^2_n\}=\]
	\[=\p\{X_1^2 > n^{2/\alpha -1}Z (1+o(1))\}  \stackrel[n \to \infty]{}{\longrightarrow } 0. \] 
	
	3) In the case $\alpha =1$ we deal with Cauchy distribution. The proof for this case is very similar to that in the case 2). We omit the details.
\end{proof} 

From this Theorem it follows that (for sufficiently large $n$) many heavy-tailed distributions will not produce any outliers of the first kind. Moreover, now we see the the presence of outliers of the first kind is in contradiction with many models having heavy tailed distributions, particularly, with models involved stable distributions.
By the way, word {\bf variability} is not defined precisely, too. It shows, that high variability may denote something different than high standard deviation. We will discuss this in Section \ref{s3}, but now let us continue the study of distributions with high probability $p_n$.

\subsection{How to obtain more outliers of the first kind?}\label{2ss2}

Here we discuss a way of constructing from a distribution another one having a higher probability to observe outliers. We call this procedure "put tail down".

\vspace{0.2cm}
Let $F(x)$ be a probability distribution function of random variable $X$ having finite second moment $\sigma^2$ and such that $F(-x) = 1-F(x)$ for all $x \in \R^1$. Take a parameter $p \in (0,1)$ and fix it. Define a new function 
\[ F_p(x)=(1-p)F(x) + p H(x),\]
where $H(x) = 0$ for $x<0$, and $H(x)=1$ for $x>0$. It is clear that $F_p(x)$ is probability distribution function for any $p \in (0,1)$. Of course, $F_p$ also has finite second moment $\sigma_p^2$, and $F_p(-x)=1-F_p(x)$. However, $\sigma_p^2=(1-p)\sigma^2$. Let $Y_p$ be a random variable with probability distribution function $F_p$. Then
\[ \p\{|Y_p|>k\sqrt{1-p}\sigma\} = 2 \p\{Y_p>k\sqrt{1-p}\sigma\}=\] \[=2(1-p)\bigl(1-F(k\sqrt{1-p}\sigma)\bigr).\]

Denoting $\bar{F}(x)=1-F(x)$ rewrite previous equality in the form
\begin{equation}
\label{eq3a}
\p\{|Y_p|>k\sqrt{1-p}\sigma\} = 2 (1-p) \bar{F}(k\sqrt{1-p}\sigma).
\end{equation}
For $Y_p$ to have more outliers than $X$ it is sufficient that
\begin{equation}\label{eq4a}
(1-p) \bar{F}(k\sqrt{1-p}\sigma) > \bar{F}(k\sigma).
\end{equation}
There are many cases in which inequality (\ref{eq4a}) is true for sufficiently large values of $k$. Let us mention two of them.
\begin{enumerate}
	\item Random variable $X$ has exponential tail. More precisely, 
	\[ \bar{F}(x) \sim C e^{-a x}, \;\text{as}\; x \to \infty , \]
	for some positive constants $C$ and $a$. In this case, inequality (\ref{eq4a}) is equivalent for sufficiently large $k$ to
	\[(1-p) > Exp\{-a\cdot k \cdot \sigma \cdot (1-\sqrt{1-p})\},\]
	which is obviously true for large $k$.
	\item $F$ has power tail, that is $\bar{F}(x) \sim C/x^{\alpha}$, where $\alpha>2$ in view of existence of finite second moment. Simple calculations show that (\ref{eq4a}) is equivalent as $k \to \infty $ to
	\[ (1-p)^{1-\alpha/2} <1. \]
\end{enumerate}

The last inequality is true for $\alpha >2$. 

\vspace{0.2cm}
Let us note that the function $F_p$ has a jump at zero. However, one can obtain similar effect without such jump by using a smoothing procedure, that is by approximating $F_p$ by smooth functions.

"Put tail down" procedure allows us to obtain more outliers in view 
of two its elements. First element consists in changing the tail by smaller, but proportional to previous with coefficient $1-p$. The second element consist in moving a part of mass into origin (or into a small neighborhood of it), which reduces the variance.

\vspace{0.2cm} The procedure described above shows us that the presence of outliers may have no connection with existence of heavy tails of underlying distribution or with experimental errors. 

\subsection{On extremal and related distributions with outliers of the first kind}\label{2ss3}

In the case of finite variance it is possible to find a distribution maximizing the probability
\begin{equation}\label{eq5}
p(k)= \p\{|X|>k \sigma\}
\end{equation}
for the case $\E X =0$, $k>1$. Corresponding boundary is given by Selberg inequality (see, for example, \cite{KS}). Namely, if $X$ is a random variable such that 
\[ \E X =0, \;\; \E X^2 =\sigma^2 < \infty\]
then for any $k>1$
\begin{equation}\label{eq6}
p(k) \leq \frac{1}{k^2}.
\end{equation}
The equality in (\ref{eq6}) is attended on a distribution concentrated at 3 points: $-\sigma$, $0$ and $\sigma$. 

In the case of $k=3$ the boundary in (\ref{eq6}) is $1/k^2 = 1/9$, which shows that for extremal distribution one may have many outliers of the first kind. However, corresponding distribution has a compact support.

It is clear that extremal distribution has very specific form and rarely appears in applications. Therefore, it is of essential interest to find out which properties of a distribution lead to the presence of rather high probability for outliers of the first kind. The form of extremal distribution tells us that a part of it has to be concentrated near mean value, while other part must be not too close to the mean. Suppose that $X_1, X_2, \ldots ,X_n$ are i.i.d. random variables. Denote by $0 \leq X_{(1)}\leq X_{(2)} \leq \ldots \leq X_{(n)}$ ordered values of absolute values of observations $|X_{j}|$, $j=1, \ldots ,n$. It seems to be true that to get many outliers of the first kind one needs to have rather high probability of the event $\p\{X_{(2)}>\rho X_{(1)}\}$, where $\rho >1$. Let us verify this statement.

To this aim calculate the probability of $X_{(2)}$ to be greater than $\rho X_{(1)}$ for a fixed $\rho >1$. Suppose that $X_1, \ldots ,X_n$ are i.i.d. random variables, and $X_{(1)}, \ldots , X_{(n)}$ are the observations ordered in its absolute values. Suppose that random variable $|X_1|$ has absolute continuous distribution function $F(x)$, and $p(x)$ is its density. Denote by $p_{1,2}(x,y)$ the common density of $X_{(1)}$ and $X_{(2)}$. We have (see, for example, \cite{Dav}) 
\begin{equation}\label{eq7}
p_{1,2}(x,y) = n(n-1) F^{n-2}(y)p(x) p(y),
\end{equation}
for $x \leq y$. Therefore the probability of the event that $X_{(2)} \geq \rho X_{(1)}$ is
\begin{equation}\label{eq8}
\begin{aligned}
\p\{X_{(2)} \geq \rho X_{(1)}\} = \\ =n \int_{0}^{\infty}\Bigl(\bigl(1-F(x)\bigr)^{n-1}-\bigl(1-F(\rho x)\bigr)^{n-1}\Bigr)p(x)dx= \\
=1-n \int_{0}^{\infty}\left(1-F(\rho x)\right)^{n-1}p(x) dx.
\end{aligned}
\end{equation}
Let us try to study limit behavior of the probability (\ref{eq8}) for large values of sample size $n$. We have
\begin{equation}\label{eq9}
\p\{X_{(2)} \geq \rho X_{(1)}\} =1-n \int_{0}^{\infty}\left(1-F(\rho x)\right)^{n-1}\rho p(\rho x)\frac{p(x)}{\rho p(\rho x)} dx. 
\end{equation}
Assume that 
\begin{equation}\label{eq10}
\lim_{x \to \infty}\bigl(1-F(\rho x)\bigr)^{n}\frac{p(x)}{\rho p(\rho x)} =0. 
\end{equation}
Integrating by parts in (\ref{eq9}) gives us
\begin{equation}\label{eq11}
\p\{X_{(2)} \geq \rho X_{(1)}\}= \lim_{x \to 0}\frac{p(x)}{\rho p(\rho x)} -\int_{0}^{\infty}\left(F(\rho x)\right)^{n} \frac{d}{d x}\Bigl(\frac{p(x)}{\rho p(\rho x)}\Bigr) dx
\end{equation}
If the function
\[ \frac{d}{d x}\frac{p(x)}{\rho p(\rho x)} \]
is absolute integrable over $(0,\infty)$ then 
\[\int_{0}^{\infty}\left(F(\rho x)\right)^{n} \frac{d}{d x}\frac{p(x)}{\rho p(\rho x)} dx \to 0 \]
as $n \to \infty$.
Therefore,
\begin{equation}\label{eq12}
\lim_{n \to \infty}\p\{X_{(2)} \geq \rho X_{(1)}\}= \lim_{x \to 0}\frac{p(x)}{\rho p(\rho x)},
\end{equation}
assuming that the limit in right-hand side of (\ref{eq12}) exists.

Finally, we obtain the following result.
\begin{thm}\label{th2}
Suppose that $X_1, \ldots ,X_n$ are i.i.d. random variables, and $X_{(1)}, \ldots , X_{(n)}$ are the observations ordered in its absolute values. Let random variable $|X_1|$ has absolute continuous distribution function $F(x)$, and let $p(x)$ be its density. Suppose that $p(x)$ is regularly varying function of index $\alpha -1$ at zero, the function 
\[  \frac{d}{d x}\frac{p(x)}{\rho p(\rho x)}\]
exists and is integrable over $(0,\infty)$, and 
\[ \lim_{x \to \infty}\bigl(1-F(\rho x)\bigr)^{n}\frac{p(x)}{\rho p(\rho x)} =0.  \]
Then
\begin{equation}\label{eq7}
\lim_{n \to \infty}\p\{X_{(2)} \leq \rho X_{(1)}\}=1-\frac{1}{\rho^{\alpha}}.
\end{equation}
\end{thm}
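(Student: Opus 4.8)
The plan is to obtain the statement directly from the limit relation (\ref{eq12}) just established, by passing to the complementary event and then evaluating its right-hand side through the definition of regular variation.

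First I would note that, since the pair $(X_{(1)},X_{(2)})$ has a joint density (the function $p_{1,2}$ introduced above), the event $\{X_{(2)}=\rho X_{(1)}\}$ sits on a line in the plane and therefore has probability zero for every $n$. Consequently
\[
\p\{X_{(2)}\le\rho X_{(1)}\}=1-\p\{X_{(2)}>\rho X_{(1)}\}=1-\p\{X_{(2)}\ge\rho X_{(1)}\},
\]
so it is enough to determine $\lim_{n\to\infty}\p\{X_{(2)}\ge\rho X_{(1)}\}$.

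Next I would check that the three hypotheses of the theorem are exactly what turns the formal computation leading to (\ref{eq12}) into a rigorous one. The assumption $\lim_{x\to\infty}(1-F(\rho x))^{n}\,p(x)/(\rho\,p(\rho x))=0$ annihilates the boundary term at infinity after the integration by parts in (\ref{eq9})--(\ref{eq11}); the existence of $\frac{d}{dx}\!\left(p(x)/(\rho\,p(\rho x))\right)$ makes that integration by parts legitimate; and its integrability on $(0,\infty)$, together with $(F(\rho x))^{n}\to 0$ for a.e.\ $x$ and the dominated convergence theorem, forces the remainder integral $\int_{0}^{\infty}(F(\rho x))^{n}\frac{d}{dx}\!\left(p(x)/(\rho\,p(\rho x))\right)dx$ to tend to $0$. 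Finally, the regular variation of $p$ at the origin guarantees that the boundary term $\lim_{x\to 0^{+}}p(x)/(\rho\,p(\rho x))$ exists, so (\ref{eq12}) holds in full.

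It then remains to evaluate that boundary term. Since $p$ is regularly varying at $0$ with index $\alpha-1$, by definition $\lim_{x\to 0^{+}}p(\lambda x)/p(x)=\lambda^{\alpha-1}$ for every $\lambda>0$; taking $\lambda=\rho$ gives $\lim_{x\to 0^{+}}p(x)/p(\rho x)=\rho^{\,1-\alpha}$, hence
\[
\lim_{x\to 0^{+}}\frac{p(x)}{\rho\,p(\rho x)}=\frac{\rho^{\,1-\alpha}}{\rho}=\frac{1}{\rho^{\alpha}}.
\]
Combining this with (\ref{eq12}) and the identity of the second paragraph yields $\lim_{n\to\infty}\p\{X_{(2)}\le\rho X_{(1)}\}=1-1/\rho^{\alpha}$. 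The step I expect to need the most care is the limit interchange inside the remainder integral and the control of $p(x)/(\rho\,p(\rho x))$ near the origin when $\alpha\neq 1$, where $p$ itself either vanishes or blows up while the ratio stays bounded; this is precisely why the integrability and tail hypotheses are imposed, so once they are granted the argument closes cleanly.
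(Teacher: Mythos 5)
Your proposal is correct and follows essentially the same route as the paper: the paper's own proof simply invokes the derivation in (\ref{eq8})--(\ref{eq12}) together with the definition of regular variation, which is exactly what you reconstruct, passing to the complementary event and evaluating $\lim_{x\to 0^{+}}p(x)/(\rho\,p(\rho x))=\rho^{-\alpha}$. Your version is in fact somewhat more careful than the paper's, since you explicitly justify the vanishing of the boundary term, the dominated-convergence step for the remainder integral, and the null probability of the event $\{X_{(2)}=\rho X_{(1)}\}$.
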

\begin{proof}
The statement of the Theorem follows from considerations given above and from the definition of regularly varying function (see, for example \cite{Sen}).
\end{proof}

Let us consider the probability $\p\{X_{(2)} \leq \rho X_{(1)}\}$ as a function of $\rho$. Under conditions of Theorem \ref{th2} this probability represents cumulative distribution function of Pareto law with parameter $\alpha$. Is it possible to find a distribution function for which the equality holds not only in limit, but for all values of $n$? The answer to this question is affirmative.

\begin{thm}\label{th3} Let $X_1, \ldots ,X_n$ be i.i.d. random variables taking values in interval $(0,1)$. Suppose additionally that:
\begin{enumerate} 
\item Distribution function $F(x)$ of $X_1$ is absolute continuous, strictly monotone on $(0,1)$, and $p(x)$ is its density.
\item $p(x)$ is regularly varying function of index $\alpha -1$ at zero.
\item $p(x)$ is differentiable on $(0,1)$ and the function
\[  \frac{d}{d x}\frac{p(x)}{\rho p(\rho x)} \]
is integrable on $(0,1)$.
\end{enumerate}	
Then the equality
\begin{equation}\label{eq14}
\p\{X_{(2)} \leq \rho X_{(1)}\} = 1-1/\rho^{\alpha}
\end{equation}	 
holds for all positive integer $n$ and all $\rho>1$ if and only if $1/X_1$ has Pareto distribution with parameter $\alpha$ and initial point $1$.
\end{thm}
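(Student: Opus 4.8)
\emph{Proof proposal.} The plan is to base both implications on the exact expression for $\p\{X_{(2)}\le\rho X_{(1)}\}$ in terms of $F$. From the joint density of $(X_{(1)},X_{(2)})$ one obtains, for every $n\ge2$ and $\rho>1$ (this is the computation behind (\ref{eq8})),
\[
\p\{X_{(2)}\le \rho X_{(1)}\}= n\int_0^\infty p(x)\Bigl[(1-F(x))^{n-1}-(1-F(\rho x))^{n-1}\Bigr]\,dx = 1-n\int_0^\infty (1-F(\rho x))^{n-1}p(x)\,dx ,
\]
the last step using $n\int_0^\infty p(x)(1-F(x))^{n-1}\,dx=1$. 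Since $F(x)=x^\alpha$ on $(0,1)$ is exactly the statement that $1/X_1$ is Pareto with parameter $\alpha$ and initial point $1$, the theorem is equivalent to: the family of identities
\[
n\int_0^\infty (1-F(\rho x))^{n-1}p(x)\,dx=\rho^{-\alpha}\qquad\text{for all }n\ge2,\ \rho>1
\]
holds if and only if $F(x)=x^\alpha$ on $(0,1)$, and this is what I would analyse.

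\emph{Sufficiency} is a direct check: if $F(x)=x^\alpha$, $p(x)=\alpha x^{\alpha-1}$ on $(0,1)$, then conditions 1--3 hold trivially (for instance $p(x)/(\rho p(\rho x))\equiv\rho^{-\alpha}$ has zero derivative), the integrand above vanishes for $x\ge1/\rho$, and the substitution $s=(\rho x)^\alpha$ gives $n\int_0^{1/\rho}(1-(\rho x)^\alpha)^{n-1}\alpha x^{\alpha-1}\,dx=n\rho^{-\alpha}\int_0^1(1-s)^{n-1}\,ds=\rho^{-\alpha}$.

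\emph{Necessity} is the substance. Assuming the displayed family of identities, I would first use absolute continuity and strict monotonicity of $F$ (condition 1) to substitute $t=F(x)$ and set $g_\rho(t):=1-F(\rho F^{-1}(t))$; then $g_\rho$ is continuous, equals $1$ at $t=0^+$, strictly decreases to $0$ on $(0,F(1/\rho))$, and is $0$ beyond, and the identities become $\int_0^1 g_\rho(t)^k\,dt=\rho^{-\alpha}/(k+1)$ for all $k\ge1$. The key observation is then that the $[0,1]$-valued random variable $g_\rho(T)$, with $T$ uniform on $(0,1)$, has moments $\rho^{-\alpha}/(k+1)$ for $k\ge1$ and, trivially, $1$ for $k=0$ --- exactly the moments of the mixture $(1-\rho^{-\alpha})\delta_0+\rho^{-\alpha}\,\mathrm{Unif}(0,1)$ --- so by determinacy of the Hausdorff moment problem on $[0,1]$ its law is that mixture. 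Hence $\p\{g_\rho(T)\le w\}=(1-\rho^{-\alpha})+\rho^{-\alpha}w$; comparing with $\p\{g_\rho(T)\le w\}=1-g_\rho^{-1}(w)$, which holds because $g_\rho$ is a continuous strictly decreasing bijection of $[0,F(1/\rho)]$ onto $[0,1]$, gives $g_\rho^{-1}(w)=\rho^{-\alpha}(1-w)$. Since $g_\rho^{-1}(w)=F\bigl(F^{-1}(1-w)/\rho\bigr)$, writing $v=1-w$ and $z=F^{-1}(v)$ turns this into the functional equation $F(z/\rho)=\rho^{-\alpha}F(z)$ for all $z\in(0,1)$, $\rho>1$; letting $z\uparrow1$ and using $F(1^-)=1$ yields $F(1/\rho)=\rho^{-\alpha}$, i.e. $F(x)=x^\alpha$ on $(0,1)$, which is precisely the assertion that $1/X_1$ is Pareto with parameter $\alpha$ and initial point $1$.

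\emph{Where the work lies.} The calculations are routine; the points requiring care are (i) checking that $g_\rho$ really is a continuous bijection onto $[0,1]$, so that $g_\rho^{-1}$ and the identity $\p\{g_\rho(T)\le w\}=1-g_\rho^{-1}(w)$ are legitimate --- this is where condition 1 enters --- and (ii) the appeal to moment determinacy, which is clean here only because $g_\rho(T)$ is supported in the compact interval $[0,1]$ and its \emph{whole} moment sequence (the trivial $k=0$ term included) is known and matches a unique law. I expect (ii), together with the bookkeeping of the substitution $t=F(x)$ and of the inversion $g_\rho^{-1}$, to be the only slightly delicate parts. If one prefers to bypass the moment argument, condition 2 affords an alternative ending: $p$ regularly varying of index $\alpha-1$ at $0$ makes $F$ regularly varying of index $\alpha$ at $0$, say $F(x)=x^\alpha L(x)$ with $L$ slowly varying, and then $F(z/\rho)=\rho^{-\alpha}F(z)$ forces $L(z/\rho)=L(z)$ on $(0,1)$, hence $L\equiv1$. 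Condition 2 is in any case what makes "$\alpha$'', and thereby the statement itself, well defined.
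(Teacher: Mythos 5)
Your proof is correct, and while the sufficiency part coincides with the paper's (a direct verification that $F(x)=x^\alpha$ gives the stated probability), your necessity argument takes a genuinely different route. The paper works at the level of the density: starting from its integration-by-parts identity (\ref{eq11}) and using regular variation to evaluate the boundary term $\lim_{x\to 0}p(x)/(\rho p(\rho x))=\rho^{-\alpha}$, it deduces that $\int_{0}^{1/\rho}(F(\rho x))^{n}\,\tfrac{d}{dx}\bigl(p(x)/(\rho p(\rho x))\bigr)\,dx=0$ for all $n$, invokes moment determinacy to conclude that this derivative vanishes identically, and then solves the resulting Cauchy functional equation $p(x)=B(\rho)p(\rho x)$ to recover the power-law density. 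You instead work with the distribution function: after the substitution $t=F(x)$ you identify the entire law of $g_\rho(T)$ via the Hausdorff moment problem, invert to get $F(z/\rho)=\rho^{-\alpha}F(z)$, and finish at once by letting $z\uparrow 1$. Both arguments ultimately rest on moment determinacy on a compact interval, but yours applies it to an honest probability law rather than to a signed density orthogonal to all powers of $F(\rho x)$, and---as you note---it uses only condition 1 in the ``only if'' direction, whereas the paper's argument genuinely needs conditions 2 and 3 (differentiability and integrability to perform the integration by parts, and regular variation both to evaluate the boundary term and to pin down the exponent in the solution of the Cauchy equation). Your ending is therefore both more elementary and slightly stronger. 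One cosmetic point: the exact formula you start from is the corrected version of the paper's (\ref{eq8}), where the inequality sign in the left-hand side is evidently misprinted (the $\rho=1$ case shows the displayed expression equals $\p\{X_{(2)}\leq\rho X_{(1)}\}$, which is what the theorem concerns).
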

\begin{proof} 
Let us suppose that $1/X_1$ has Pareto distribution with parameter $\alpha$ and initial point $1$. Then 
\[F(x) = x^{\alpha}\]
for $x \in (0,1)$. In this case, $p(x) = \alpha x^{\alpha -1}$ for $x \in (0,1)$ and $p(x) = 0$ otherwise. It is easy to calculate that $\frac{d}{d x}\frac{p(x)}{\rho p(\rho x)} = 0$. From (\ref{eq11}) it follows (\ref{eq14}).

Suppose now that (\ref{eq14}) holds. From (\ref{eq11}) we see that necessarily
\[ \int_{0}^{1/\rho}\left(F(\rho x)\right)^{n} \frac{d}{d x}\frac{p(x)}{\rho p(\rho x)} dx = 0  \]
for all positive integers $n$. In view of compactness of the interval $(0,1/\rho)$ and strictly monotone character of $F$ the problem of moments has unique solution. Therefore,
\[ \frac{d}{d x}\frac{p(x)}{\rho p(\rho x)} = 0 \]
for all $x \in (0,1/\rho)$. This implies that
\[ \frac{p(x)}{\rho p(\rho x)} = A(\rho), \]
where $A(\rho)$ depends on $\rho$ only. In other words, we have the following equation
\begin{equation}\label{eq15}
 p(x) = B(\rho)p(\rho x),
\end{equation}
for all $\rho >1$ and all $x \in (0,1/\rho)$, and $B(\rho)=\rho A(\rho)$. Passing to logarithms transforms (\ref{eq15}) to well-known Cauchy functional equation, which leads to $p(x) = \alpha x^{\alpha -1}$. 
\end{proof}

Let now $Y$ be a symmetric random variable such that $1/|Y|$ has Pareto distribution with parameter $\alpha$ and initial point $1$. We expect that $Y$ has outliers of the first kind with rather high probability although its distribution is not too close to extremal one and has a compact support.
Really, simple calculations give us that
\[\E Y =0, \;\; \sigma^2=\E Y^2 = \frac{\alpha}{2+\alpha}, \;\; \p\{|Y| \geq 3 \sigma\} =1-3^{\alpha}\Bigl(\frac{\alpha}{2+\alpha}\Bigr)^{\alpha/2}.  \]
For $ \alpha = 0.089115$ this probability is approximately $0.0417598$, which is greater than for Gaussian distribution.

As a conclusion of this section, we can say the presence of outliers of the first kind is not connected to tails of a distribution. It is associated with the behavior of the density near mean value. 

\section{Second definition of distant outliers}\label{s3}
\setcounter{equation}{0}
\subsection{Outliers of the second kind}\label{3ss1}

Here we are considering another look on distant outliers which was proposed in \cite{KKK}.  Namely, outlier in this sense is an extremal observation which is larger in its absolute value than $1/\kappa$ times previous extremal observation. Very similar definition may be founded in \cite{Haw}.

Let us give precise definition.
\begin{df}\label{de3}
Let $X_1, \ldots ,X_n$ be i.i.d. random variables, and $X_{(1)}, \ldots , X_{(n)}$ be the observations ordered in its absolute values (from minimal to maximal). We say $X_{(n)}$ is an outlier of order $1/\kappa$ if $X_{(n-1)}\leq \kappa X_{(n)}$, where $\kappa \in (0,1)$ is a fixed number.
\end{df} 
In this section we find a boundary for probability of outlier of order $1/\kappa$ and show its connection with the index of stability.
 
Let us calculate the probability of $X_{(n)}$ to be an outlier of order $1/\kappa$ for a fixed $\kappa \in (0,1)$. Suppose that $X_1, \ldots ,X_n$ are i.i.d. random variables, and $X_{(1)}, \ldots , X_{(n)}$ are the observations ordered in its absolute values. Suppose that random variable $|X_1|$ has absolute continuous distribution function $F(x)$, and $p(x)$ is its density. Denote by $p_{n-1,n}(x,y)$ the common density of $X_{(n-1)}$ and $X_{(n)}$. We have (see, for example, \cite{Dav}) 
\begin{equation}\label{eq16}
p_{n-1,n}(x,y) = n(n-1) F^{n-2}(x)p(x) p(y),
\end{equation}
for $x \leq y$. Therefore the probability of the event that $X_{(n-1)} \leq \kappa X_{(n)}$ is
\begin{equation}\label{eq17}
\p\{X_{(n-1)} \leq \kappa X_{(n)}\} = n \int_{0}^{\infty}F^{n-1}(\kappa y)p(y)dy.
\end{equation}
Let us try to study limit behavior of the probability (\ref{eq17}) for large values of sample size $n$. We have
\begin{equation}\label{eq18}
\p\{X_{(n-1)} \leq \kappa X_{(n)}\}=n \int_{0}^{\infty}F^{n-1}(x)\kappa p(\kappa x)\frac{p(x)}{\kappa p(\kappa x)}dx. 
\end{equation}
Assume that 
\begin{equation}\label{eq19}
\lim_{x \to 0}F^{n}(\kappa x)\frac{p(x)}{p(\kappa x)} =0. 
\end{equation}
Integrating by parts in (\ref{eq18}) gives us
\begin{equation}\label{eq20}
\begin{aligned}
\p\{X_{(n-1)} \leq \kappa X_{(n)}\}= \lim_{x \to \infty}\frac{p(x)}{\kappa p(\kappa x)}-\\ -\int_{0}^{\infty}F^{n}(\kappa x)\Bigl(\frac{p^{\prime}(x)}{\kappa p(\kappa x)}- \frac{p(x) p^{\prime}(\kappa x)}{p^2(\kappa x)} \Bigr) dx
\end{aligned}
\end{equation}
If the function
\[ \Bigl(\frac{p^{\prime}(x)}{\kappa p(\kappa x)}- \frac{p(x) p^{\prime}(\kappa x)}{p^2(\kappa x)} \Bigr) \]
is integrable over $(0,\infty)$ then 
\[ \int_{0}^{\infty}F^{n}(\kappa x)\Bigl(\frac{p^{\prime}(x)}{\kappa p(\kappa x)}- \frac{p(x) p^{\prime}(\kappa x)}{p^2(\kappa x)} \Bigr) dx \to 0 \]
as $n \to \infty$.
Therefore,
\begin{equation}\label{eq21}
\lim_{n \to \infty}\p\{X_{(n-1)} \leq \kappa X_{(n)}\}= \lim_{x \to \infty}\frac{p(x)}{\kappa p(\kappa x)},
\end{equation}
assuming that the limit in right-hand side of (\ref{eq21}) exists.

Finally, we obtain the following result.
\begin{thm}\label{th4}
Suppose that $X_1, \ldots ,X_n$ are i.i.d. random variables, and $X_{(1)}, \ldots , X_{(n)}$ are the observations ordered in its absolute values. Let random variable $|X_1|$ has absolute continuous distribution function $F(x)$, and let $p(x)$ be its density. Suppose that $p(x)$ is regularly varying function of index $-(\alpha+1)$ on infinity, the function 
\[ \Bigl(\frac{p^{\prime}(x)}{\kappa p(\kappa x)}- \frac{p(x) p^{\prime}(\kappa x)}{p^2(\kappa x)} \Bigr) \]
is integrable over $(0,\infty)$, and 
\[ \lim_{x \to 0}F^{n}(\kappa x)\frac{p(x)}{p(\kappa x)} =0.  \]
Then
\begin{equation}\label{eq22}
\lim_{n \to \infty}\p\{X_{(n-1)} \leq \kappa X_{(n)}\}=\kappa^{\alpha}.
\end{equation}
\end{thm}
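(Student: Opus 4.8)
The plan is to make rigorous the chain of manipulations (3.16)--(3.21) displayed just above the statement. I would begin from the joint density of the two top order statistics in absolute value, $p_{n-1,n}(x,y)=n(n-1)F^{n-2}(x)p(x)p(y)$ for $x\leq y$, and integrate it over the region of the event, $\{x\leq \kappa y\}$ (which, since $\kappa<1$, already sits inside $\{x\leq y\}$). Carrying out the inner integral in $x$, using $\int_0^{\kappa y}n(n-1)F^{n-2}(x)p(x)\,dx=nF^{n-1}(\kappa y)$ together with $F(0)=0$ (valid because $F$ is absolutely continuous), yields
\[
\p\{X_{(n-1)}\leq \kappa X_{(n)}\}=n\int_0^\infty F^{n-1}(\kappa y)\,p(y)\,dy ,
\]
which is (3.17).

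Next I would insert the factor $\kappa p(\kappa x)/\bigl(\kappa p(\kappa x)\bigr)$ into the integrand (legitimate since the set where $p(\kappa x)=0$ contributes nothing and $F$ is absolutely continuous), observe that $n\kappa p(\kappa x)F^{n-1}(\kappa x)=\frac{d}{dx}F^n(\kappa x)$, and integrate by parts with the primitive $F^n(\kappa x)/n$. This gives
\[
\p\{X_{(n-1)}\leq \kappa X_{(n)}\}=\Bigl[F^n(\kappa x)\frac{p(x)}{\kappa p(\kappa x)}\Bigr]_0^\infty-\int_0^\infty F^n(\kappa x)\,\frac{d}{dx}\!\left(\frac{p(x)}{\kappa p(\kappa x)}\right)dx .
\]
The lower boundary term vanishes by the hypothesis $\lim_{x\to 0}F^n(\kappa x)p(x)/p(\kappa x)=0$; for the upper one, $F^n(\kappa x)\to 1$ while $p(x)/\bigl(\kappa p(\kappa x)\bigr)$ tends to a finite limit by regular variation, so it equals $\lim_{x\to\infty}p(x)/\bigl(\kappa p(\kappa x)\bigr)$. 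Expanding the derivative reproduces (3.20).

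Then I would let $n\to\infty$. Because $p$ is regularly varying of index $-(\alpha+1)$ at infinity, $p$ is eventually positive, hence $F(x)<1$ for every $x$ and therefore $F^n(\kappa x)\to 0$ pointwise on $(0,\infty)$. Bounding $\bigl|F^n(\kappa x)\,\frac{d}{dx}(p(x)/\kappa p(\kappa x))\bigr|$ by $\bigl|\frac{p^{\prime}(x)}{\kappa p(\kappa x)}-\frac{p(x)p^{\prime}(\kappa x)}{p^2(\kappa x)}\bigr|$, which is integrable by hypothesis, dominated convergence forces the integral term to $0$. What survives is $\lim_{x\to\infty}p(x)/\bigl(\kappa p(\kappa x)\bigr)$, and the definition of a regularly varying function of index $-(\alpha+1)$ gives $p(\kappa x)/p(x)\to\kappa^{-(\alpha+1)}$, i.e. $p(x)/\bigl(\kappa p(\kappa x)\bigr)\to\kappa^{\alpha+1}/\kappa=\kappa^{\alpha}$, which is (3.22).

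The routine parts are the order-statistics identity and the differentiation of $p(x)/\kappa p(\kappa x)$. The step requiring real care is the passage to the limit under the integral sign: one must verify that $F^n(\kappa x)\to 0$ holds at every point of $(0,\infty)$ — which is where the unboundedness of the support implicit in regular variation at infinity enters — and that the integrand is dominated, uniformly in $n$, by the single integrable function supplied in the hypotheses. A subsidiary point is to ensure that the factorization by $\kappa p(\kappa x)$ and the ensuing integration by parts are not compromised by zeros of $p$; restricting attention to the support of $p$ and invoking absolute continuity of $F$ handles this.
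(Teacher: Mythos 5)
Your proposal is correct and follows essentially the same route as the paper: the paper's proof simply refers back to the chain (3.16)--(3.21), which you reproduce and justify, supplying the dominated convergence argument and the regular-variation limit $p(x)/(\kappa p(\kappa x))\to\kappa^{\alpha}$ that the paper leaves implicit. You also silently correct the typo in (3.18), where $F^{n-1}(x)$ should read $F^{n-1}(\kappa x)$ for the integration by parts to work as stated.
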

\begin{proof}
The statement of the Theorem follows from considerations given above and from the definition of regularly varying function (see, for example \cite{Sen}).
\end{proof}

\subsection{Connection to the law of large numbers and statistical definition of stability index}\label{3ss2}

Theorem \ref{th4} shows that there is a connection between stable distribution and the probability of presence of $1/\kappa$ outliers.
Namely, the condition ``$p(x)$ is regularly varying function of index $-(\alpha+1)$ on infinity" implies that corresponding random variables $X_1, \ldots ,X_n$ belong to the region of attraction of $\alpha$-stable distribution. The probability (\ref{eq22}) is defined by index $\alpha$ in unique way, and increase with decreasing $\alpha$. 

For the first glance, it is not clear why there is no law of large numbers in the case of $\alpha \in (0,1)$. Really, in the case of symmetric distributions, it seems to be possible, that large positive observations may be compensated by corresponding negative observations, coming into empirical mean with the same probability as positive. Mean value of a mass distribution is a coordinate of the center of masses. One more argument for symmetric about zero distributions is that the mean value may be does not exist, but corresponding integral converges in Cauchy principal value. Therefore we may interpret the origin as corresponding center of masses. However, for $\alpha \in (0,1)$ the limit probability for $X_{(n-1)}$ to be less that $\kappa X_{(n)}$ is greater than $\kappa$ itself. It shows, that very often the ``maximal" observation $X_{(n)}$ cannot be ``compensated" by smaller observations. It gives us an intuitive explanation of why there is no law of large numbers for the case of $\alpha \in (0,1)$. 

Is it possible to use the relation (\ref{eq22}) to define the stability index $\alpha$? Of course, it is possible theoretically, but is impossible statistically, because we cannot pass to limit for any large (but finite) number $n$ of observations. However, the probability $\p\{X_{(n-1)}< \kappa X_{(n)}\}$ (for fixed $\kappa$ and $n$) may be statistically estimated. Such probability does not define ``true" value of $\alpha$, however, small value of such estimator for $\alpha$ shows that empirical mean is not close to any constant at least for corresponding values of $n$.

\subsection{Characterization of Pareto distribution}\label{3ss3}

\begin{thm}\label{th5} Let $X_1, \ldots ,X_n$ be i.i.d. random variables taking values in interval $(1,\infty)$. Suppose additionally that:
	\begin{enumerate} 
		\item Distribution function $F(x)$ of $X_1$ is absolute continuous, strictly monotone on $(1,\infty)$, and $p(x)$ is its density.
		\item $p(x)$ is regularly varying function of index $-(\alpha +1)$ at infinity.
		\item $p(x)$ is differentiable on $(1,\infty)$ and the function
		\[  \frac{d}{d x}\frac{p(x)}{\kappa p(\kappa x)} \]
		is integrable on $(1,\infty)$.
	\end{enumerate}	
	Then the equality
	\begin{equation}\label{eq14}
	\p\{X_{(n-1)} \leq \kappa X_{(n)}\} = \kappa ^{\alpha}
	\end{equation}	 
	holds for all positive integer $n$ and all $\kappa \in (0,1)$ if and only if $X_1$ has Pareto distribution with parameter $\alpha$ and initial point $1$.
\end{thm}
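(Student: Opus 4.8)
The plan is to mirror the structure of the proof of Theorem \ref{th3}, adapting it from the ``near zero'' regime to the ``at infinity'' regime. First I would establish the ``if'' direction: assuming $X_1$ has Pareto distribution with parameter $\alpha$ and initial point $1$, one has $F(x) = 1 - x^{-\alpha}$ and $p(x) = \alpha x^{-\alpha-1}$ on $(1,\infty)$, so a direct computation gives $\frac{p(x)}{\kappa p(\kappa x)} = \frac{\alpha x^{-\alpha-1}}{\kappa \cdot \alpha \kappa^{-\alpha-1} x^{-\alpha-1}} = \kappa^{\alpha}$, a constant in $x$; hence $\frac{d}{dx}\frac{p(x)}{\kappa p(\kappa x)} = 0$, and all the regularity hypotheses are trivially satisfied. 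Then the integrated-by-parts identity (\ref{eq20}) collapses: the integral term vanishes and the boundary term $\lim_{x\to\infty}\frac{p(x)}{\kappa p(\kappa x)} = \kappa^{\alpha}$, which (since here the constant equals the limit) actually gives (\ref{eq14}) for every finite $n$, not merely in the limit. I should double-check that the vanishing-at-the-other-endpoint condition (\ref{eq19}), here $\lim_{x\to\infty}F^n(\kappa x)\frac{p(x)}{p(\kappa x)}$ \dots actually in this setting the relevant boundary contribution is at $x$ where $\kappa x = 1$, i.e. at $x = 1/\kappa$, since $F$ vanishes below $1$; so the integral in (\ref{eq18}) effectively runs over $(1/\kappa,\infty)$ and the lower boundary term is handled because $F(\kappa\cdot(1/\kappa)) = F(1) = 0$.

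For the ``only if'' direction, suppose (\ref{eq14}) holds for all $n$ and all $\kappa\in(0,1)$. Subtracting the general identity (\ref{eq20}) — valid under hypotheses 1--3 — from the assumed value $\kappa^\alpha$, and using that hypothesis 2 forces $\lim_{x\to\infty}\frac{p(x)}{\kappa p(\kappa x)} = \kappa^\alpha$ by the definition of regular variation of index $-(\alpha+1)$, I would conclude that
\begin{equation*}
\int_{1/\kappa}^{\infty} F^n(\kappa x)\,\frac{d}{dx}\!\left(\frac{p(x)}{\kappa p(\kappa x)}\right) dx = 0
\end{equation*}
for all positive integers $n$. Substituting $t = F(\kappa x)$ turns this into a moment problem: $\int_0^1 t^n\, g(t)\, dt = 0$ for all $n\ge 1$, where $g$ is the pushforward of $\frac{d}{dx}\frac{p(x)}{\kappa p(\kappa x)}$ under the strictly monotone change of variables (integrable by hypothesis 3). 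Since $F$ is strictly monotone and the transformed variable ranges over the bounded interval $(0,1)$, the moment problem is determinate, so $g\equiv 0$, hence $\frac{d}{dx}\frac{p(x)}{\kappa p(\kappa x)} = 0$ on $(1/\kappa,\infty)$ for every $\kappa$.

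From $\frac{d}{dx}\frac{p(x)}{\kappa p(\kappa x)} = 0$ I get $\frac{p(x)}{p(\kappa x)} = B(\kappa)$ depending only on $\kappa$, i.e. the multiplicative Cauchy equation $p(\kappa x) = p(x)/B(\kappa)$ for all $\kappa\in(0,1)$ and all $x$ in the appropriate range. Passing to logarithms (using that $p>0$ by strict monotonicity of $F$ and measurability of $p$) yields $\log p(e^u)$ affine in $u$, so $p(x) = c\, x^{-\beta}$ on $(1,\infty)$; normalization $\int_1^\infty p = 1$ forces $c = \beta - 1$ with $\beta > 1$, and matching the regular-variation index from hypothesis 2 (index $-(\alpha+1)$) pins down $\beta = \alpha + 1$, hence $p(x) = \alpha x^{-\alpha-1}$, which is exactly the Pareto density with parameter $\alpha$ and initial point $1$.

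The main obstacle I anticipate is bookkeeping at the lower endpoint of integration. Because the support starts at $1$ rather than at $0$, the formulas (\ref{eq18})--(\ref{eq20}) as written for the ``at zero'' case need to be re-derived with the correct limits $(1/\kappa,\infty)$, and one must verify that no spurious boundary term at $x = 1/\kappa$ survives the integration by parts — this works precisely because $F(1) = 0$ kills the factor $F^{n-1}(\kappa x)$ there, but it has to be checked rather than asserted. A secondary technical point is justifying the exchange of limit and integral (dominated convergence using $F^n(\kappa x)\le 1$ and the assumed integrability) and confirming that the moment problem is genuinely determinate on the bounded interval after the change of variables; both are routine but should be stated. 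Everything else follows the template of Theorem \ref{th3} essentially verbatim.
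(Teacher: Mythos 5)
Your proposal is correct and follows exactly the route the paper intends: the paper omits the proof of this theorem, stating only that it ``uses the same ideas and similar calculations'' as Theorem \ref{th3}, and your adaptation (direct verification for the Pareto density in one direction; vanishing of the moments $\int F^n(\kappa x)\,\frac{d}{dx}\bigl(p(x)/(\kappa p(\kappa x))\bigr)\,dx$, determinacy of the moment problem after the change of variable, and the Cauchy functional equation in the other) is precisely that argument transposed from the ``at zero'' to the ``at infinity'' regime. Your extra care with the lower limit of integration at $x=1/\kappa$ and with the identification $\lim_{x\to\infty}p(x)/(\kappa p(\kappa x))=\kappa^{\alpha}$ via regular variation is sound and, if anything, more explicit than the paper.
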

The proof of Theorem \ref{th5} uses the same ideas and similar calculations as that of Theorem \ref{th3} and is omitted.

\section{Outliers and multi-modality}\label{s4}
\setcounter{equation}{0}
The presence of two or more modes for empirical distribution contradicts to many parametric models. Such are, for example, models based on Gaussian or stable distributions. However, to understand how many modes has an empirical distribution one need to construct an non-parametric estimator for the density. To this aim it is necessarily to have a large number of observations. 

In this section we propose another theoretical approach to define outliers of such (third) kind. Namely, we propose to consider this as a multiple variants of the first kind outliers. Suppose that $X$ is a random variable. There must be some points $a_1, a_2, \ldots , a_k$ such that $|X-a_j|$ has outliers of the first kind for each $j=1,2, \ldots ,k$. In other words, the density $p(x)$ of random variable $X$ must have $k$ points, in which $p(x)$ is regularly varying function with different indexes. To see this, one may apply the methods of Section \ref{s2} to each random variable $|X-a_j|$. We omit other details.  

\section{Outliers in multivariate case}\label{s5}
\setcounter{equation}{0}

It is clear that there are much more possibilities for appearance of outliers in multidimensional case than in one dimensional. Unfortunately, we can not consider any large enough set of them. However, it is possible to mention some cases closely connected to one dimensional variant. 

The first (and more essential) case is the convex hull of sample points. The volume of this hull is one dimensional random variable. One may apply previously introduced definitions of outliers to this variable. The existence of outliers for the volume means that there are contradictions in multidimensional model as well. 

The second example is given by the distances (say, Euclidean) between sample points. The situation here is absolutely similar to the first example. It is, essentially, one dimensional, too. 

\section{Conclusions}

There were given some precise definitions of outliers.
It appears that the outliers of the first kind are connected to the presence of high pikes of the density, while second type outliers are associated with heavy tails of the distribution. Some definitions of outliers in multidimensional cases may be reduced to one dimensional case through the choice of appropriate characteristic of random vectors. The presence of outliers allows one to reject some parametric models. It provides also some ideas on how to construct properly modified models.

\section*{Acknowledgment}
The work was partially supported by Grant GA\v{C}R 16-03708S.

\end{document}